\documentclass{amsart}
\usepackage{amssymb,latexsym,amsthm}
\usepackage[centertags]{amsmath}
\usepackage{amsfonts}
\usepackage{graphicx}



\DeclareMathOperator{\tr}{\rm{tr}}
\DeclareMathOperator{\ib}{\rm{b}_I}

\DeclareMathOperator{\SL}{\rm{SL}}
\DeclareMathOperator{\inter}{\rm{int}}
\DeclareMathOperator{\bd}{\partial}
\DeclareMathOperator{\cl}{cl}


\newcommand{\mc}{\mathcal}

\newcommand{\ol}{\overline}
\newcommand{\li}{\hat} 
\newcommand{\til}{\tilde}

\newcommand{\R}{\mathbb{R}}\newcommand{\N}{\mathbb{N}}
\newcommand{\Z}{\mathbb{Z}}\newcommand{\Q}{\mathbb{Q}}
\newcommand{\T}{\mathbb{T}}

\newcommand{\sm}{\setminus}


\newcommand{\ie}{i.e.\ }
\newcommand{\eg}{e.g.\ }

\newtheorem{theorem}{Theorem}[subsection]
\newtheorem{corollary}[theorem]{Corollary}
\newtheorem{lemma}[theorem]{Lemma}
\newtheorem{proposition}[theorem]{Proposition}

\newtheorem{question}[theorem]{Question}

\newtheorem*{theorem*}{Theorem}

\theoremstyle{definition}
\newtheorem{definition}[theorem]{Definition}

\theoremstyle{remark}
\newtheorem{remark}[theorem]{Remark}


\title{Aperiodic invariant continua for surface homeomorphisms}

\begin{document}

\begin{abstract}
We prove that if a homeomorphism of a closed orientable surface $S$ has no wandering points and leaves invariant a compact, connected set $K$ which contains no periodic points, then either $K=S=\T^2$, or $K$ is the intersection of a decreasing sequence of annuli. A version for non-orientable surfaces is  given. 
\end{abstract}

\author[A.  Koropecki]{Andres Koropecki}

\address{Universidade Federal Fluminense, Instituto de Matem\'atica, Rua M\'ario Santos Braga S/N, 24020-140 Niteroi, RJ, Brasil}

\email{koro@mat.uff.br}

\maketitle

\subsection{Introduction}

By \emph{aperiodic invariant continuum} we mean a compact connected set which is invariant by some homeomorphism of a compact surface, and which contains no periodic points.  We are interested in describing aperiodic invariant continua of non-wandering homeomorphisms. This type of sets appear frequently when studying generic area-preserving diffeomorphisms, due to a result of Mather \cite{mather-area}, which states that for such diffeomorphisms, the boundary of certain open invariant sets (see Definition \ref{def:regular}) is a finite union of aperiodic continua. Thus, having good topological information about aperiodic invariant continua is helpful to describe the dynamics of a $C^r$-generic area-preserving diffeomorphism. An example of this is the work of Franks and Le Calvez in \cite{franks-lecalvez} in the case that the surface is a sphere. 

Our main result is the following
\begin{theorem}\label{th:continuo} Let $f\colon S\to S$ be a homeomorphism of a compact orientable surface such that $\Omega(f)=S$. If $K$ is an $f$-invariant continuum, then one of the following holds:
\begin{enumerate}
\item $f$ has a periodic point in $K$;
\item $K$ is annular;
\item $K=S=\T^2$;
\end{enumerate}
\end{theorem}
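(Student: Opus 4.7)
The plan is to argue by contradiction: assume $K$ is an $f$-invariant continuum with no periodic point of $f$, and deduce that either $K=S=\T^2$ or $K$ is annular. First, handle the case $K=S$: by standard results on non-wandering surface homeomorphisms, an orientable surface of genus $\neq 1$ always carries a periodic point of $f$ (on $S^2$ via Lefschetz/Brouwer for orientation-preserving maps and an argument on iterates for orientation-reversing, and on surfaces with $\chi(S)<0$ via Nielsen--Thurston theory), so the aperiodicity of $f$ on $S$ forces $S=\T^2$, placing us in case (3). Assume henceforth $K\neq S$.

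Enumerate the connected components $\{U_i\}$ of $S\setminus K$; these are open and permuted by $f$. For each $U_i$ that is an open topological disk, pass to an iterate with $f(U_i)=U_i$ (and to $f^2$ if necessary to make it orientation-preserving). Brouwer's plane translation theorem says that a fixed-point-free orientation-preserving homeomorphism of the plane has every orbit wandering, contradicting the non-wandering property that $f|_{U_i}$ inherits from $f$ on the invariant open set $U_i$. Hence each invariant disk component contains a periodic point of $f$; these periodic points lie outside $K$ and so are compatible with the hypothesis. Replace $K$ by the $f$-invariant continuum $\widehat K$ obtained by adjoining all the disk components, so that every complementary component of $\widehat K$ in $S$ is a non-disk.

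For each non-disk complementary component $V$ of $\widehat K$, after an iterate fixing $V$, analyze the prime-end compactification of $V$. A prime-end rotation number $\rho(V)\in\R/\Z$ is defined via the induced action on the circle of prime ends. The key dynamical input, in the spirit of the arguments used by Franks--Le Calvez in \cite{franks-lecalvez}, is that when $\rho(V)$ is rational under the non-wandering hypothesis there must be a periodic point of $f$ in the impression of some prime end, hence in $\partial V\subset K$. Since $K$ has no periodic points, every such $\rho(V)$ is irrational.

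The main remaining step, which I expect to be the principal obstacle, is converting these dynamical constraints into annular structure for $K$. One has to rule out $\widehat K$ having $\geq 3$ non-disk complementary components, and rule out positive ``genus'' of $\widehat K$ in $S$: either kind of extra topology would open enough room for an equivariant Brouwer/Le Calvez-type foliation argument to produce a periodic point inside $K$, using the irrationality of the $\rho(V)$ together with an index computation on a well-chosen cover. Once $\widehat K$ is reduced to having at most two non-disk complementary components and no genus contribution, thin invariant topological annular neighborhoods of $K$ can be produced by intersecting $\widehat K$-neighborhoods with the two essential complementary components (filling back in the disks when they have small diameter); the nested intersection of these annuli is exactly $K$, yielding annularity. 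The technical heart of the argument is this last step, combining prime-end analysis, equivariant index theory, and the non-wandering hypothesis in the absence of periodic points on $K$.
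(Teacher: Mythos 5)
Your proposal correctly handles the $K=S$ case and correctly observes that the complementary components are permuted by $f$ and that prime-end/Cartwright--Littlewood considerations give dynamical constraints near $K$. However, the central mechanism of the proof is missing, and you explicitly flag it as such: the step you call ``the principal obstacle'' and ``the technical heart'' — ruling out $\geq 3$ non-disk complementary components and positive genus, and then producing nested annular neighborhoods — is exactly where the work happens, and your proposal does not give an argument, only a vague appeal to ``an equivariant Brouwer/Le Calvez-type foliation argument'' and ``an index computation on a well-chosen cover.'' That is not the route the paper takes, and it is not clear how such an argument would actually go.

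The key idea you are missing is a surgery on the \emph{surface}, not on the continuum. The paper does not adjoin the disk components of $S\setminus K$ to $K$ (your $\widehat K$ construction, which is problematic anyway since it may introduce periodic points into the continuum and does not obviously simplify the topology when the disk components are infinite in number and accumulate everywhere on $K$). Instead, each non-disk complementary component $V$ is a complementary domain with finitely many ideal boundary points; near each ideal end one finds an essential simple closed curve $\sigma_i$ in $V$ close to $K$, modifies $f$ near $\cup_i\sigma_i$ (keeping it equal to $f$ on a neighborhood of $K$) so that the modified map fixes each $\sigma_i$, cuts $V$ along these curves, discards the deep part of $V$, and collapses the boundary circles to points. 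After doing this for every non-disk component one obtains a closed surface $\til{S}\supset K$ and a homeomorphism $\til{f}$ agreeing with $f$ near $K$, with the crucial property that \emph{every} component of $\til{S}\setminus K$ is an open disk whose boundary lies in $K$. Now Lemma \ref{lem:index-bd} (Cartwright--Littlewood plus no wandering points) forces each invariant complementary disk to have fixed point index exactly $1$, so the Lefschetz formula gives $L(\til{f}^n)=k\geq 1$ for a suitable $n$; combined with Lemma \ref{lem:lefschetz} this forces $\chi(\til{S})>0$, i.e.\ $\til{S}=S^2$, and then $L=2$ forces exactly two complementary disks, from which annularity of $K$ in $\til{S}$ (and hence in $S$, since the surgery did not touch a neighborhood of $K$) follows immediately. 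None of this surface-modification-plus-Lefschetz machinery appears in your outline; the disk components are handled automatically by the index argument rather than by Brouwer's translation theorem, and the ``fill back in small disks'' step you sketch at the end is unnecessary because $K$ itself (not $\widehat K$) sits unchanged inside the collapsed sphere. In short, the proposal identifies correct ingredients but lacks the decisive construction and leaves the central step as a gap.
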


By \emph{annular continuum} we mean an intersection of a nested sequence of topological annuli (see Definition \ref{def:annular}). When $S$ is non-orientable, a version of Theorem \ref{th:continuo} holds, however with two extra cases: $K$ could be a non-separating continuum in a M\"obius strip, and in the case that $K=S$, the surface could be $\T^2$ or the Klein bottle (Corollary \ref{coro:non-orientable}).

An important result of \cite{franks-lecalvez} states that for a generic area-preserving diffeomorphism of the sphere, the stable and unstable manifolds of hyperbolic periodic points are dense. This fact was generalized to an arbitrary surface by Xia \cite{xia-area}, and one of the main steps of his proof is obtaining a version of Theorem \ref{th:continuo} which assumes generic conditions on the (area-preserving) diffeomorphism and is restricted to continua which are the closure of a particular kind of open sets. Thus Theorem \ref{th:continuo} extends Xia's result to general homeomorphisms without wandering points (which includes area-preserving homeomorphisms), with no additional hypothesis on the continuum and no genericity conditions.

A question that motivates studying aperiodic invariant continua is the following:

\begin{question} \label{q:1} What are the possible obstructions to the transitivity of a $C^r$-generic area-preserving diffeomorphism?
\end{question}

Bonatti and Crovisier proved in \cite{bonatti-crovisier} that a $C^1$-generic area-preserving diffeomorphism of a compact manifold (of any dimension) is transitive. However, in dimension $2$, it is known that this is not true in the $C^r$ topology if $r$ is large enough, because of the KAM phenomenon: There are open sets of diffeomorphisms where a $C^r$-generic element has an elliptic periodic point surrounded by invariant circles (see, for instance, \cite{douady}), and this is an obstruction to transitivity. Hence the question is: is this the only possible obstruction? In other words, does the non-transitivity of a $C^r$-generic area-preserving diffeomorphism imply the existence of elliptic periodic points?

Studying Question \ref{q:1}, aperiodic invariant continua appear naturally as boundaries of invariant open sets. Theorem \ref{th:continuo} implies that the presence of annular periodic continua is a necessary condition for the non-transitivity of a generic area-preserving diffeomorphism. In fact, a consequence of Theorem \ref{th:continuo} is that the familly of aperiodic invariant continua which are minimal with respect to the property of being annular is pairwise disjoint (we call these continua \emph{frontiers}, see \cite{k-m} for details). This allows a sort of decomposition of the dynamics in terms of the aperiodic invariant continua. 
Similar concepts appear in the work of J\"ager \cite{jaeger} (where the word \emph{circloid} is used instead of frontier) when studying nonwandering homeomorphisms of the torus with bounded mean motion. 

In \cite{k-m}, these observations play a fundamental role in the proof of the following result: for any $r\geq 1$, given a $C^r$-generic pair of area-preserving diffeomorphisms of a compact surface, the \emph{iterated function system} (or, equivalently, the action of the semi-group) generated by them is transitive.

We should mention that the basic idea of the proof of Theorem \ref{th:continuo} is inspired by the analogous result from \cite{franks-lecalvez} in the case where the surface is a sphere.

This article is organized as follows. In \S1-5 we recall some background and results about ideal boundary points, continua, Lefschetz numbers and indices and we prove some elementary facts; in \S5 we prove our main theorem, and a corollary about rotation numbers is mentioned; in \S6 we state a version of the theorem for non-orientable surfaces, with an outline of the proof.

\subsection*{Acknowledgments}
I am grateful to M. Nassiri for motivating this problem, as well as L. N. Carvalho, J. Franks and E. R. Pujals for useful discussions.


\subsection{Ideal boundary, continua, and complementary domains}

If $U$ is a non-compact surface, a \emph{boundary representative} of $U$ is a sequence $P_1\supset P_2\supset\cdots$ of connected unbounded (\ie not relatively compact) open sets in $U$ such that $\bd_U P_n$ is compact for each $n$ and for any compact set $K\subset U$, there is $n_0>0$ such that $P_n\cap K=\emptyset$ if $n>n_0$. Two boundary representatives $\{P_i\}$ and $\{P_i'\}$ are said to be equivalent if for any $n>0$ there is $m>0$ such that $P_m\subset P_n'$, and vice-versa. The \emph{ideal boundary} of $U$ is defined as the set $\ib U$ of all equivalence classes of boundary representatives. We denote by $U^*$ the space $U\cup \ib U$ with the topology generated by sets of the form $V \cup V'$, where $V$ is an open set in $U$ such that $\bd_U V$ is compact, and $V'$ denotes the set of elements of $\ib U$ which have some boundary representative $\{P_i\}$ such that $P_i\subset V$ for all $i$. We call $U^*$ the \emph{ideal completion} of $U$.
Any homeomorphism $f\colon U\to U$ extends to a homeomorphism $f^*\colon U^*\to U^*$ such that $f^*|_{U} = f$. If $U$ is orientable and $\ib U$ is finite, then $U^*$ is a compact orientable boundaryless surface. See \cite{richards} and \cite{ahlfors-sario} for more details. 

From now on, $S$ will denote a compact orientable surface. Let $U$ be an open connected subset of $S$. For each $p^*\in \ib U$, we write $Z(p^*)$ for the set $\cl_S(\bigcap_{V} V\cap U)$ where the intersection is taken over all neighborhoods $V$ of $p^*$ in $U^*$. It is easy to see that $Z(p^*)$ is a compact, connected, nonempty set (see \cite{mather-cara}).

\begin{definition}\label{def:regular} We say that $U\subset S$ is a \emph{complementary domain} if it is a connected component of the complement of some compact connected subset of $S$.
\end{definition}

The next proposition is a direct consequence of \cite[Lemma 2.3]{mather-area}. 

\begin{proposition} If $U$ is a complementary domain in $S$, then it has finitely many ideal boundary points.
\end{proposition}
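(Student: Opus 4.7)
I would approximate $K$ by a decreasing sequence of compact surface-with-boundary neighborhoods and analyze components.

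First, since $S$ is a compact triangulable surface, fix a triangulation of $S$ and, for each $k \in \N$, let $N_k$ be the union of all closed simplices in the $k$-th barycentric subdivision that meet $K$. Then each $N_k$ is a compact 2-submanifold-with-boundary of $S$ with $K \subset \operatorname{int}(N_k)$, $\bigcap_k N_k = K$, and $\partial N_k$ a finite disjoint union of simple closed curves lying in $S \setminus K$.

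Next, set $C_k = U \setminus \operatorname{int}(N_k)$. I would check that $C_k$ is compact: it is closed in $U$, and since $\partial U \subset K \subset \operatorname{int}(N_k)$ no point of $\partial U$ can be a limit of $C_k$ in $S$, so $C_k$ is closed in the compact space $S$. For any $p^* \in \ib U$ and any representative $\{P_n\}$, the defining property applied to $L = C_k$ forces $P_n \subset U \cap \operatorname{int}(N_k)$ eventually. Since $P_n$ is connected, this pins down a unique component $W_k(p^*)$ of $U \cap \operatorname{int}(N_k)$ containing all sufficiently late $P_n$, depending only on $p^*$.

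The main topological step is to show that for $k$ large the number of components of $U \cap \operatorname{int}(N_k)$ that are not relatively compact in $U$ is finite. A component $W$ is not relatively compact in $U$ exactly when $\overline{W}_S \cap K \neq \emptyset$. If moreover $\overline{W}_S \cap \partial N_k = \emptyset$, then $\partial_S W \subset K$; combined with $\partial_U W = \partial_S W \cap U = \emptyset$ this gives $W = U$ by connectedness of $U$, contradicting $U \not\subset N_k$ (which holds for $k$ large, since $U \cap \bigcap_k N_k = U \cap K = \emptyset$). Thus $\overline{W}_S \cap \partial N_k \neq \emptyset$, and a short collar argument shows $W$ contains the $K$-side collar of at least one boundary circle of $\partial N_k$ lying in $U$; since distinct components contain disjoint such collars, the number of unbounded components is at most the finite number of components of $\partial N_k$ lying in $U$.

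Finally, to show the assignment $p^* \mapsto W_k(p^*)$ is injective for $k$ large, take $p^* \neq q^*$ with disjoint representatives $\{P_n\}, \{Q_n\}$. The set $\bd_U P_n$ is a compact subset of $U$, hence at positive distance from $K$, so $\bd_U P_n \subset C_k$ for $k$ large. Any path in $U \cap \operatorname{int}(N_k)$ from $P_n$ to $Q_n$ would have to cross $\bd_U P_n$, but that lies outside $\operatorname{int}(N_k)$; so no such path exists and $P_n,Q_n$ lie in distinct components.

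\textbf{Main obstacle.} The step I expect to be most delicate is upgrading ``for each pair $p^* \neq q^*$ some $k$ separates them'' to a uniform finiteness of $\ib U$: in principle an inverse limit of finite sets of unbounded components could still be infinite if their count grows unboundedly with $k$. The resolution must use that $U$ is an open subsurface of the compact $S$ with boundary in a connected continuum, which, via an Euler-characteristic or homological argument, forces the number of non-relatively-compact components of $U \cap \operatorname{int}(N_k)$ to stabilize in $k$. This is where the result genuinely uses that $K$ is connected (it controls $H^0$ of $K$ in an Alexander-duality type computation) and that $S$ has finite genus.
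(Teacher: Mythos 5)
Your approach is genuinely different from the paper's, which simply invokes \cite[Lemma 2.3]{mather-area} without reproving it; what you sketch is essentially a self-contained proof in the spirit of Mather's lemma, so the comparison is really about whether your sketch closes. Most of the intermediate steps are sound: $C_k$ is compact; every end is eventually captured by a unique non--relatively-compact component $W_k(p^*)$; each such component must contain the collar of at least one boundary circle of $\partial N_k$ lying in $U$, giving the bound by $\#\{\text{circles of }\partial N_k\text{ in }U\}$; and distinct ends eventually land in distinct components. (One small technical remark: the union of closed simplices of a fine subdivision meeting $K$ need not be a $2$-manifold with boundary at vertices; replace it by the second derived/regular neighborhood of the subcomplex of simplices meeting $K$, which is.)

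The gap you flag at the end, however, is real and is exactly where the content of the proposition lies: the separation argument produces, for each pair $p^*\neq q^*$, \emph{some} $k$ that distinguishes them, but the cardinality of the set of non--relatively-compact components of $U\cap\operatorname{int}(N_k)$ is nondecreasing in $k$ and could a priori tend to infinity, in which case the inverse limit (the set of ends) would be infinite. Your bound by the number of boundary circles of $N_k$ inside $U$ does not help by itself, since that number can also grow with $k$. To close the argument one does need the Euler-characteristic input you anticipate, and here is one way to carry it out. First replace $K$ by $S\setminus U$: this is still a continuum (it equals $K\cup\bigcup_{V\neq U}\overline{V}$, each $\overline{V}$ is connected and meets $K$, and $K$ is connected), and $U$ is its full complement, so one may assume $S\setminus K=U$. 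Next modify the $N_k$ so that no component of $C_k=S\setminus\operatorname{int}(N_k)\subset U$ is a disk (absorb any disk component into $N_k$); equivalently, exhaust $U$ by connected compact subsurfaces $S_i$ such that $U\setminus\operatorname{int} S_i$ has no relatively compact components. With this normalization $S\setminus\operatorname{int} S_i$ is a \emph{connected} compact surface with boundary $\partial S_i$, and if $b_i$ denotes the number of boundary circles and $g_i\le g(S)$ the genus of $S_i$, then additivity of $\chi$ along circles gives
$$\chi(S)=\chi(S_i)+\chi(S\setminus\operatorname{int} S_i)=(2-2g_i-b_i)+(2-2g_i'-b_i),$$
so $2b_i=4-2g_i-2g_i'-\chi(S)$ and $b_i$ is bounded independently of $i$. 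Since each unbounded component of $U\setminus\operatorname{int} S_i$ absorbs the collar of at least one boundary circle, the number of such components is $\le b_i$, hence uniformly bounded, and therefore $\ib U$ is finite. This is precisely where connectivity of $K$ is used (to make $S\setminus\operatorname{int} S_i$ connected after the normalization) together with finite genus of $S$. Without that Euler-characteristic step, your argument as written does not establish finiteness, only ``pairwise eventual separation.''
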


If $\ib U$ is finite, for each $p^*\in \ib U$ we may choose a neighborhood $V$ of $p$ such that $\ol{V}$ is homeomorphic to a closed disk, and such that $\ol{V}\cap \ib U = \{p\}$. Thus $V\sm\{p\}$ is a topological annulus in $S$. And, unless $U$ is a topological disk, the boundary of $V$ is an essential simple closed curve in $S$. From this, we have

\begin{proposition} \label{pro:regular-surface} If $U$ is a complementary domain in $S$, then $\ib U$ is finite, and there is a compact bordered surface  $S_U\subset U$ such that $U\sm S_U$ has finitely many connected components, each of which is homeomorphic to an open annulus.
\end{proposition}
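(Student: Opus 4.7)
The first assertion $\# \ib U < \infty$ is already granted by the preceding proposition, so I will focus on constructing the bordered surface $S_U$. The plan is to exploit the fact that, since $U$ is an orientable open subsurface of $S$ and $\ib U$ is finite, the ideal completion $U^*$ is a compact orientable boundaryless surface, as recalled at the end of the section on ideal boundaries. Inside this closed surface the ideal boundary $\ib U = \{p_1^*, \dots, p_k^*\}$ is just a finite (hence discrete) set of ordinary points, and cutting $U^*$ along small disks around them will produce the surface $S_U$.

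More precisely, I would first invoke the paragraph preceding the statement: for each $p_i^*\in \ib U$ there is an open neighborhood $V_i$ of $p_i^*$ in $U^*$ whose closure is a topological closed disk and such that $\overline{V_i}\cap \ib U = \{p_i^*\}$. Since $U^*$ is Hausdorff and the $p_i^*$ are distinct, by shrinking the $V_i$ if necessary I may assume that the closed disks $\overline{V_1},\dots,\overline{V_k}$ are pairwise disjoint. Then set
\[
S_U := U^* \setminus \bigcup_{i=1}^{k} V_i .
\]
This set is closed in the compact surface $U^*$, hence compact; and because each $V_i$ contains the unique ideal point $p_i^*$, we have $S_U\cap \ib U = \emptyset$, so $S_U\subset U$. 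Being the complement in the closed surface $U^*$ of finitely many disjoint open disks, $S_U$ is a compact bordered surface whose boundary is the disjoint union of the circles $\partial V_i$.

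Finally, I would compute the complement:
\[
U\setminus S_U \;=\; U\cap \bigcup_{i=1}^k V_i \;=\; \bigcup_{i=1}^k (V_i\setminus\{p_i^*\}).
\]
These are finitely many disjoint open sets, and each $V_i\setminus\{p_i^*\}$ is a once-punctured open disk in the surface $U^*$, hence homeomorphic to an open annulus; this is exactly the identification already flagged in the discussion preceding the proposition. This yields the claimed decomposition.

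The only delicate point in this plan is ensuring that the disk neighborhoods $V_i$ can be taken pairwise disjoint while still having $\overline{V_i}\cap \ib U=\{p_i^*\}$; but this is not really an obstacle, since $\ib U$ is finite and $U^*$ is a Hausdorff (indeed metrizable) compact surface, so standard shrinking suffices. Everything else is bookkeeping about what gets removed from $U^*$ when we pass to $U$.
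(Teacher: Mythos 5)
Your proof is correct and follows the same approach the paper takes implicitly in the paragraph preceding the proposition: choose pairwise disjoint closed-disk neighborhoods of the finitely many ideal boundary points in $U^*$, set $S_U$ to be their complement, and observe that each punctured disk $V_i\setminus\{p_i^*\}$ is an open annulus. The paper presents the proposition as an immediate consequence of exactly this construction, so no genuine difference.
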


\begin{corollary} \label{coro:regular-boundary} If $U$ is a complementary domain in $S$, then $\bd U$ has finitely many connected components.
\end{corollary}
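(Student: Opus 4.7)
The plan is to apply Proposition \ref{pro:regular-surface} directly and express $\bd U$ as a finite union of the impression sets $Z(p_i^*)$ associated to the ideal boundary points. Let $\ib U = \{p_1^*, \ldots, p_k^*\}$ and decompose $U = S_U \cup A_1 \cup \cdots \cup A_k$, where $S_U$ is the compact bordered surface given by Proposition \ref{pro:regular-surface} and each $A_i$ is the open annular end of $U$ associated to one of the $p_i^*$. Once I prove the key equality $\bd U = \bigcup_{i=1}^k Z(p_i^*)$, the corollary follows from the fact (recalled just before Definition \ref{def:regular}) that each $Z(p_i^*)$ is connected, so $\bd U$ is a union of at most $k$ connected sets and therefore has at most $k$ connected components.

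For the forward inclusion $\bd U \subseteq \bigcup_i Z(p_i^*)$, I would take $x \in \bd U$ and a sequence $x_n \in U$ with $x_n \to x$ in $S$. Since $x \notin U$ and $S_U$ is compact, the tail of $(x_n)$ lies in $\bigcup_i A_i$, so the pigeonhole principle gives a subsequence inside a single $A_i$. Choosing a decreasing neighborhood basis $\{V_m\}$ of $p_i^*$ in $U^*$ whose traces $V_m\cap U$ are cofinal with the tails of the annulus $A_i$, the sequence $(x_n)$ eventually enters each $V_m$; a standard diagonal extraction then yields $x \in \cl_S(V_m\cap U)$ for every $m$, so $x\in Z(p_i^*)$. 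For the reverse inclusion, any $y\in Z(p_i^*)$ is a limit of points of $U$, hence $y \in \cl_S U$; and $y\notin U$, because a boundary representative $\{P_n\}$ of $p_i^*$ eventually avoids every compact subset of $U$, so no point of $U$ can lie in $\cl_S(P_n)$ for all $n$.

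The main obstacle, such as it is, is the bookkeeping in the forward inclusion: one must verify that the same single sequence $(x_n)$ witnesses membership of $x$ in $\cl_S(V_m\cap U)$ for every $m$ simultaneously, not just for one $m$ at a time. This is handled by the observation that a sequence in $U$ converging in $S$ to a point outside $U$ is automatically an "escaping" sequence in the ideal completion, which combined with the finiteness of $\ib U$ forces the subsequence to accumulate on a single ideal boundary point. Every other step is essentially formal.
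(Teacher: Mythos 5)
Your proof is correct and follows essentially the same route as the paper: both use Proposition \ref{pro:regular-surface} to reduce to the finitely many annular ends $A_i$ and then observe that the corresponding piece of $\bd U$ is the connected set $Z(p_i^*)$, so $\bd U$ is a finite union of connected sets. The paper states the key identification $\bd A_i \sm U = Z(p_i^*)$ without detail, whereas you spell out both inclusions of the equivalent equality $\bd U = \bigcup_i Z(p_i^*)$; this is more verbose but is the same argument.
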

\begin{proof}
Choose $K\subset U$ such that $U\sm K$ is a finite union of disjoint annuli. If $A$ is a connected component of $U\sm K$, then $\bd A\sm U$ is connected (since it is $Z(p^*)$ for some $p^*\in \ib U$), and $\bd U = \bigcup_A \bd A\sm U$, where the union is taken over all connected components $A$ of $U\sm K$. Since these are finitely many, the claim follows.

\end{proof}

We remark that the number of boundary components of $U$ may be smaller than the number of ideal boundary points, since the sets $Z(p^*)$, $p^*\in \ib U$ need not be disjoint.

\subsubsection{Continua}
By a \emph{continuum} we mean a compact connected set.

\begin{proposition} \label{lem:inf-cc-disk} Let $K$ be a continuum and $\mathcal{U}$ the family of all connected components of $S\sm K$. Then all but finitely many elements of $\mathcal{U}$ are simply connected. 
\end{proposition}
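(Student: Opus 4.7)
The plan is to combine the structure theorem of Proposition~\ref{pro:regular-surface} with an Euler characteristic count, using the connectedness of $K$ at the key step. The heuristic is that each non-simply-connected complementary component contributes nontrivially to the topology of $S$, so there cannot be too many.

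Fix any finite subfamily $U_{i_1},\dots,U_{i_N}$ of non-simply-connected elements of $\mathcal{U}$. Proposition~\ref{pro:regular-surface} provides a compact bordered subsurface $S_{U_{i_k}}\subset U_{i_k}$, of genus $g_k$ with $b_k$ boundary circles, onto which $U_{i_k}$ deformation retracts; since $U_{i_k}$ is not a disk, neither is $S_{U_{i_k}}$, so $g_k + b_k - 1 \ge 1$. Set $W = \bigsqcup_k S_{U_{i_k}}$ and $V = S \sm \inter W$, so that $V$ is a compact bordered subsurface of $S$ with $\bd V = \bd W$.

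The crucial step is to show that $V$ is connected. Since $K$ is disjoint from $W$ and connected, $K$ lies in a single component $V_1$ of $V$. Any other component $V_j$ would be disjoint from $K$, hence contained in some $U\in\mathcal{U}$, with $\bd V_j \subset \bd W \subset \bigcup_k U_{i_k}$. If such a $U$ is simply connected, then $U$ is an open disk, which forces $\bd V_j = \emptyset$; but no compact surface without boundary embeds in an open disk. If instead $U = U_{i_k}$, then by Proposition~\ref{pro:regular-surface} $V_j$ lies inside one of the half-open annular ends of $U_{i_k}$, and $\bd V_j$ is contained in its single bounding circle $\gamma$; since $[\gamma]$ generates the fundamental group of the annular end while $[\bd V_j]$ is a product of commutators in $\pi_1(V_j)$, this is also impossible. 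Hence $V$ is connected.

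With $V$ connected, the additivity $\chi(S) = \chi(W) + \chi(V)$ (the intersection $W \cap V = \bd W$ being a disjoint union of circles of vanishing Euler characteristic), together with $\chi(S_{U_{i_k}}) = 2 - 2g_k - b_k$ and $\chi(V) = 2 - 2g(V) - \sum_k b_k$, yields after simplification
\[
g(S) \;=\; g(V) + \sum_{k=1}^N (g_k + b_k - 1) \;\ge\; N.
\]
Since the subfamily was arbitrary, at most $g(S)$ components of $S\sm K$ are non-simply-connected. The main obstacle is the connectedness of $V$: this is precisely where connectedness of $K$ enters, and the case analysis on components $V_j$ requires the structural information about annular ends supplied by Proposition~\ref{pro:regular-surface}.
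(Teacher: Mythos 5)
Your approach is genuinely different from the paper's, and it is a good one. The paper argues directly with homotopy classes of curves: either some non-disk component contains a curve that bounds a disk in $S$ (in which case $K$ and all other components live inside that disk), or all nontrivial curves in complementary components are essential in $S$, and then three disjoint homotopic such curves from three distinct components force $K$ into two disjoint annuli, a contradiction. Your argument instead cuts the surface along the cores $S_{U_{i_k}}$ supplied by Proposition~\ref{pro:regular-surface} and runs an Euler characteristic count; this has the extra payoff of an explicit bound (at most $g(S)$ non-simply-connected components), which the paper's proof does not make visible. The price is a reliance on Proposition~\ref{pro:regular-surface}, where the paper's proof is self-contained.

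There is, however, a small gap in your connectedness step. You split into the cases ``$U$ is simply connected'' and ``$U = U_{i_k}$,'' but these do not cover all possibilities: $U$ could be a non-simply-connected complementary component that you did not include in the finite subfamily $\{U_{i_1},\dots,U_{i_N}\}$ (and you cannot simply take the subfamily to be all non-simply-connected components, since finiteness is what you are proving). The correct dichotomy is ``$U \notin \{U_{i_1},\dots,U_{i_N}\}$'' versus ``$U = U_{i_k}$ for some $k$.'' In the first case the argument is the same as the one you gave for $U$ simply connected, except that you should not invoke ``open disk'': since $U$ is disjoint from every $U_{i_k}$ it is disjoint from $W$, so $\bd V_j \subset \bd W$ forces $\bd V_j = \emptyset$, and then $V_j$ is a nonempty compact boundaryless surface embedded in the connected open surface $U$; by invariance of domain $V_j$ is open and closed in $U$, hence $V_j = U$, which is impossible because $U$ is a noncompact open subset of $S$. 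With this correction the proof is complete and correct.
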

\begin{proof} We consider two cases. First suppose that for some $U\in \mc{U}$, there is a simple closed curve $\gamma$ which is homotopically nontrivial in $U$ but trivial in $S$. Let $D$ be the topological disk bounded by $\gamma$ in $S$. Since $\gamma$ is nontrivial in $U$, there is some point of $K$ in $D$. Since $K$ is connected and $\gamma \subset S\sm K$, it follows that $K\subset D$. Thus if $U'\in \mc{U}$ and $U'\neq U$, then $U'\subset D$. From this we conclude that $U'$ is simply connected. Indeed, if $\gamma'$ is a homotopically nontrivial simple closed curve in $U'$, then by a similar argument it bounds a disk $D'\subset D$ which intersects $K$, so $K\subset D'$. But this implies that $S\sm D'\subset U'$ (because it is connected) so $U'=U$, a contradiction. Therefore, all but one element of $\mc{U}$ are simply connected.

Now suppose that for every $U\in \mc{U}$, if $\gamma$ is homotopically nontrivial in $U$ then it is also homotopically nontrivial in $S$, and assume that there are infinitely many complementary domains $U_1, U_2, \dots$ of $K$ which are not simply connected. For each $U_i$, let $\gamma_i$ be a simple homotopically nontrivial simple closed curve in $U_i$. By our assumption, $\gamma_i$ is also nontrivial in $S$. The curves $\{\gamma_i:i\in \N\}$ are pairwise disjoint, so there must be infinitely many of them in the same homotopy class of $S$. But if, say, $\gamma_1$, $\gamma_2$ and $\gamma_3$ are all homotopic and disjoint, there are two disjoint annuli $A_1$ and $A_2$ such that (up to reordering the indices) $\bd A_1 = \gamma_1\cup \gamma_2$ and $\bd A_2=\gamma_2\cup \gamma_3$. Since the boundary of $A_1$ contains points of two different connected components of $S\sm K$, it is clear that $A_1$ must intersect $K$. Since $K$ is connected, it follows that $K\subset A_1$. But with the same argument we also conclude that $K\subset A_2$, a contradiction. This completes the proof.
\end{proof}

\subsubsection{Annular continua}

\begin{definition} \label{def:annular} A continuum $K\subset S$ is said to be \emph{annular} if it has a neighborhood $A\subset S$ homeomorphic to an open annulus such that $A\sm K$ has exactly two components, both homeomorphic to annuli. We call any such $A$ an \emph{annular neighborhood} of $K$. 

\end{definition}

This definition is equivalent to saying that $K$ is the intersection of a sequence $\{A_i\}$ of closed topological annuli such that $A_{i+1}$ is an essential subset of $A_i$ (\ie it separates the two boundary components of $A_i$), for each $i\in \N$. 

%
%

\subsection{Indices and Lefschetz number}

If $f$ is a homeomorphism and $D$ is a closed topological disk without fixed points in its boundary, we denote by $\mathrm{Ind}_f(D)$ the fixed point index of $f$ in $D$. (see \cite{dold}). If there are finitely many fixed points of $f$ in $D$, then $\mathrm{Ind}_f(D)$ is equal to the sum of the Lefschetz indices of these fixed points. If $D_1,\dots,D_n$ are disjoint disks such that the set of fixed points of $f$ is contained in the interior of their union, then we have the Lefschetz formula:
$$\sum_{i=1}^n \mathrm{Ind}_f(D_i) = L(f)$$
where $L(f)$ denotes the Lefschetz number of $f$.

\begin{lemma} \label{lem:lefschetz} Let $S$ be an orientable closed surface with Euler characteristic $\chi(S)\leq 0$. Then, for any homeomorphism $f\colon S\to S$ there is $n>0$ such that the Lefschetz number of $f^n$ is non-positive: $L(f^n)\leq 0$.
\end{lemma}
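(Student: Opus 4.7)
The plan is to reduce the statement to a purely linear algebra question about the action of $f$ on first homology. On a closed orientable surface $S$ of genus $g$, the Lefschetz formula gives
$$L(f^n) = 1 + \epsilon^n - \tr(A^n),$$
where $A := f_*|_{H_1(S;\Q)} \in \GL(2g,\Z)$ and $\epsilon := f_*|_{H_2(S;\Q)} \in \{\pm 1\}$. The hypothesis $\chi(S)\leq 0$ forces $g\geq 1$, so $2g\geq 2$. Restricting to even exponents gives $\epsilon^n = 1$, so it suffices to find $m\geq 1$ with $\tr(B^m) \geq 2$, where $B := A^2$; one then takes $n = 2m$. The matrix $B$ preserves the integer symplectic intersection form on $H_1(S;\Z)$ (independently of whether $f$ preserves orientation, since $f^2$ does), i.e.\ $B \in \mathrm{Sp}(2g,\Z)$.

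Let $\rho$ be the spectral radius of $B$. The first case is $\rho = 1$: the symplectic eigenvalue pairing $\lambda \leftrightarrow \lambda^{-1}$ forces every eigenvalue of $B$ to have modulus exactly one. Each such eigenvalue is then an algebraic integer all of whose Galois conjugates, being other eigenvalues, also lie on the unit circle; by Kronecker's theorem it must be a root of unity. Some power $B^N$ therefore has every eigenvalue equal to $1$, is unipotent, and satisfies $\tr(B^N) = 2g \geq 2$.

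The remaining case is $\rho > 1$. Let $\rho e^{i\alpha_1}, \ldots, \rho e^{i\alpha_r}$ denote the eigenvalues of $B$ of modulus $\rho$, counted with multiplicity, and let $\rho' < \rho$ bound the moduli of the remaining eigenvalues. Then
$$\tr(B^n) = \rho^n \sum_{j=1}^r e^{in\alpha_j} + O\bigl((\rho')^n\bigr).$$
Dirichlet's simultaneous Diophantine approximation theorem, applied to $\alpha_1/(2\pi),\ldots,\alpha_r/(2\pi)$, produces arbitrarily large $n$ along which each $n\alpha_j$ lies within a prescribed small neighborhood of $0 \pmod{2\pi}$. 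For such $n$ the leading factor has real part close to the positive integer $r$, so $\tr(B^n) \sim r\rho^n \to +\infty$, which certainly yields some $n$ with $\tr(B^n) \geq 2$.

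The only delicate ingredient is the Diophantine approximation step, which is needed to rule out persistent cancellation among the leading oscillatory terms when $B$ has complex eigenvalues of modulus $\rho>1$ with arguments incommensurable with $2\pi$; everything else is the standard Lefschetz decomposition together with Kronecker's characterization of algebraic integers lying on the unit circle.
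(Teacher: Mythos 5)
Your proof is correct and takes a genuinely different route from the paper. The paper splits into two cases: for $\chi(S)<0$ it simply cites Xia's paper for the statement, and for $\chi(S)=0$ (i.e.\ $S=\T^2$) it argues directly from the classification of $\SL(2,\Z)$ matrices into periodic, parabolic, and hyperbolic types, in each case exhibiting $n$ with $\tr(A^n)\geq 2$. You instead give a single self-contained argument valid for all genus $g\geq 1$: pass to $B=A^2\in\mathrm{Sp}(2g,\Z)$ so that $L(f^{2m})=2-\tr(B^m)$, use the symplectic pairing $\lambda\leftrightarrow\lambda^{-1}$ plus Kronecker's theorem to handle spectral radius one (some power is unipotent with trace $2g\geq 2$), and use Dirichlet simultaneous approximation to align the arguments of the dominant eigenvalues when the spectral radius exceeds one. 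The payoff of your approach is uniformity and self-containment (no external reference is needed for higher genus, and the $g=1$ trichotomy is absorbed as a special case); the cost is invoking heavier tools (Kronecker, Dirichlet) where the paper's $g=1$ case needs only the elementary $\SL(2,\Z)$ classification. One small point worth making explicit: Dirichlet's theorem is usually stated as producing \emph{some} good $q$ in a range, and obtaining arbitrarily large good $q$ requires the standard extra observation that if the good $q$'s were bounded then the $\alpha_j/(2\pi)$ would all be rational with a common denominator, in which case periodicity supplies arbitrarily large good $q$ anyway.
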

\begin{proof} When $\chi(S)<0$, a proof can be found in \cite{xia-area}. If $\chi(S)=0$, then $S\simeq \T^2$, and the automorphism induced by $f$ on $H_1(S,\Q)$ can be represented by a matrix $A\in \SL(2,\Z)$. It is well known that any such matrix is either periodic ($A^{n}=I$ for some $n>0$, so $\tr(A^n)=2$), parabolic (and then $\tr(A^2)= 2$) or hyperbolic (and then $\tr(A^2)>2$). In either case, there is $n$ such that $L(f^n)= 2-\tr(A^n)\leq 0$.
\end{proof}

\subsection{Wandering points}
Given a homeomorphism $f\colon S\to S$, we say that a nonempty open set $U$ is \emph{wandering} if $f^n(U)\cap U=\emptyset$ for all $n>0$ (or, equivalently, for all $n\neq 0$). We denote by $\Omega(f)$ the set of non-wandering points of $f$. That is, the (compact, invariant) set of points which have no wandering neighborhood. 

\begin{remark}\label{rem:wandering} We will use the following observations several times: 
\begin{enumerate}
\item If $\Omega(f)=S$, then $\Omega(f^n)=S$. To see this, given a nonempty open set $U_0$ we can define recursively $U_{i+1}= f^{k_{i+1}}(U_i)\cap U_i$ where $k_{i+1}>0$ is chosen such that the intersection is nonempty. Then there are integers $i_1<i_2<\cdots<i_n$ such that $k_{i_1}=k_{i_2}=\cdots = k_{i_n} (\mathrm{mod}\, n)$, so that $k_{i_1}+\cdots+k_{i_n}=mn$ for some $m>0$, and it is easy to verify that $f^{mn}(U_0)\cap U_0\neq \emptyset$.
\item If $\Omega(f)=S$ and $\{U_i\}_{i\in\N}$ is a family of pairwise disjoint open sets which are permuted by $f$ (\eg the connected components of the complement of a compact periodic set) then each $U_i$ is periodic for $f$.
\end{enumerate}
\end{remark}

\begin{lemma} \label{lem:index-bd} Let $D\subset S$ be a topological open disk and $f\colon \ol{D}\to \ol{D}$ a homeomorphism. Suppose that there is a neighborhood of $\bd D$ in $\ol{D}$ which does not contain the positive or the negative orbit of any wandering open set, and $f$ has no fixed points in $\bd D$. Then the index of the set of fixed points of $f$ in $D$ is $1$. In other words, there is a closed topological disk $D'$ which contains all fixed points of $f$ in $D$, such that $\mathrm{Ind}_f(D')=1$.
\end{lemma}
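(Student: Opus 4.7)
The plan is to apply the Lefschetz fixed-point formula after extending $f$ to a self-homeomorphism of the $2$-sphere. First, I would observe that $f$ is necessarily orientation-preserving: the restriction $f|_{\bd D}$ is a fixed-point-free self-homeomorphism of a circle, and any orientation-reversing homeomorphism of $S^1$ has at least two fixed points, so $f|_{\bd D}$ and therefore $f$ preserve orientation.

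Since the set of fixed points of $f$ in $\ol{D}$ is closed and disjoint from the compact set $\bd D$, it is contained in the interior of some closed topological disk $D'\subset D$ whose boundary contains no fixed point of $f$. I would then cap $\ol{D}$ off along $\bd D$ with a second closed disk $D^c$ to form a $2$-sphere $\Sigma$, and extend $f|_{\bd D}$ to $D^c$ via the coning formula $F(re^{i\theta})=r\cdot f(e^{i\theta})$, after identifying $D^c$ with the closed unit disk in $\C$ and $\bd D$ with the unit circle. The resulting $F$ is a homeomorphism of $D^c$ whose unique fixed point is the cone point $0$; a winding-number computation for the vector field $F-\mathrm{id}$ on small circles about $0$, using that $f|_{\bd D}$ has degree $1$ and is fixed-point-free, shows that the index of $F$ at $0$ equals $1$.

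Gluing $f$ and $F$ produces an orientation-preserving self-homeomorphism $\til f\colon \Sigma\to\Sigma$ with Lefschetz number $L(\til f)=2$, whose fixed point set consists of the fixed points of $f$ (all lying in $\inter(D')$) together with the cone point $0$. Applying the Lefschetz fixed-point formula on $\Sigma$ yields $\mathrm{Ind}_{\til f}(D')+1=L(\til f)=2$, and since $\til f$ agrees with $f$ on a neighborhood of $D'$ this gives $\mathrm{Ind}_f(D')=1$, as required.

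The main technical point is the calculation of the index of $F$ at the cone point, which amounts to tracking the argument of $F(z)-z$ as $z$ traverses a small circle around $0$; for a fixed-point-free orientation-preserving circle homeomorphism there is a lift $\til f\colon\R\to\R$ with $\til f(\theta)>\theta$ for all $\theta$, and this forces the winding number in question to equal $1$. I note that the wandering-set hypothesis does not seem to play an essential role in the Lefschetz-based argument sketched above; the only property that is really used is the absence of fixed points on $\bd D$. It is presumably exploited in the author's more direct approach, perhaps to control the recurrence of points near $\bd D$ so as to avoid having to invoke the Lefschetz formula on a sphere and the explicit index computation at the cone point.
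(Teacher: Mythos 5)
Your argument breaks at the very first step: you treat $\bd D$ as a topological circle, but $D$ is only assumed to be a topological open disk \emph{in a surface}, and the frontier of such a disk can be an extremely complicated continuum (non--locally-connected, e.g.\ a pseudo-circle, a Warsaw-circle-like set, a hairy circle, etc.). Consequently the statements ``$f|_{\bd D}$ is a fixed-point-free self-homeomorphism of a circle,'' ``cap $\ol D$ off along $\bd D$ with a second closed disk,'' and ``identify $\bd D$ with the unit circle'' are all unjustified: $\ol D$ is in general not a manifold with boundary, so there is no gluing, no coning formula, and no winding-number computation on $\bd D$. The orientation-preservation claim fails for the same reason.

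The correct way to realize your plan is to replace $\bd D$ by the circle of \emph{prime ends}: the prime-end compactification $\hat D$ of $D$ is a genuine closed disk whose boundary is a circle, and $f|_D$ extends to a homeomorphism $\hat f$ of $\hat D$. This is exactly what the paper does. But then one must show that $\hat f$ has no fixed point on the prime-end circle, and \emph{this is where the non-wandering hypothesis is essential}, not an optional shortcut: $f$ having no fixed point on $\bd D$ does not by itself prevent $\hat f$ from having a fixed prime end (a single point of $\bd D$ can be the principal point of a fixed prime end even though $f$ moves it, and more subtle phenomena occur when $\bd D$ is not locally connected). The paper invokes the Cartwright--Littlewood theorem, which uses precisely the hypothesis that no neighborhood of $\bd D$ in $\ol D$ contains the forward or backward orbit of a wandering open set, to rule out fixed prime ends. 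Once one knows $\hat f$ is fixed-point-free on $\bd\hat D$, the index-one conclusion follows as you suggest (index $1$ for a fixed-point-free orientation-preserving circle homeomorphism viewed from inside the disk); but your ``Lefschetz on a sphere obtained by capping'' and the direct index computation in $\hat D$ are essentially the same calculation. So the idea of the final index step is fine, but the proposal is missing the central technical point of the lemma, and misdiagnoses the role of the wandering hypothesis as inessential when in fact it is the crux.
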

\begin{proof} Since it contains no fixed points, $\bd D$ is not reduced to a single point. By a theorem of Cartwright and Littlewood \cite{cartwright-littlewood-2} (see also \cite[Proposition 2.1]{franks-lecalvez}), the extension $\hat{f}$ of $f|_D$ to the prime ends compactification $\hat{D}$ of $D$ has no fixed points in the boundary circle $\bd{\hat{D}}$. Thus $\hat{f}$ is orientation-preserving, and $\mathrm{Ind}_{\hat{f}}(\hat{D})=1$, and since fixed points of $\hat{f}$ are in a compact subset of $D$, we can choose a closed disk $D'\subset D$ containing all fixed points of $\hat{f}$, so that $\mathrm{Ind}_{\hat{f}}(D')=\mathrm{Ind}_{\hat{f}}(\hat{D})=1$. But since $D'\subset D$ and $\hat{f}|_{D'}=f|_{D'}$, it follows that $\mathrm{Ind}_{\hat{f}}(D')=\mathrm{Ind}_{f}(D')$ and we are done.
\end{proof} 

\subsection{Main theorem}

We begin with a brief outline of the proof. The idea is to generalize the index argument used in \cite{franks-lecalvez} for the case of the sphere. However, to do that we need to modify the underlying manifold: we consider the (possibly infinitely many) connected components of $S\sm K$. The non-wandering hypothesis guarantees that these components are permuted by $f$. Since these components are complementary domains, they have finitely many ends. Next we ``remove'' every nontrivial component (except for a neighborhood of its boundary), leaving a bordered submanifold $N$ of $S$ which is a neighborhood of $K$. We can modify  $f|_N$ obtaining a map which coincides with $f$ in a neighborhood of $K$, but which leaves the boundary of $N$ invariant. After collapsing the boundary circles of $N$ to points, we obtain a new compact surface containing $K$, and a homeomorphism which has no periodic points on $K$, and by a Lefschetz index argument we conclude that this surface can only be a sphere. From this we conclude easily that $K$ is annular.

\begin{remark}
If $K$ is an aperiodic invariant continuum and $K\neq S$, then Theorem \ref{th:continuo} implies that $K$ is annular. Following \cite[\S3]{franks-lecalvez} (using a small annular neighborhood $A$ of $K$, and lifting $f$ to the universal covering of $A$) one can define the rotation set $\rho_f(K)\subset \R$ (which is defined modulo integer translations). 
Now, with almost no modifications, the proof of \cite[Proposition 5.2]{franks-lecalvez} remains valid. Thus we obtain the following

\begin{corollary} If $f\colon S\to S$ is an area preserving homeomorphism and $K\subsetneq S$ is an invariant continuum with no periodic points, then $K$ is annular,  $\rho_{f}(K)$ consists of a single irrational number $\alpha$, and the rotation numbers in the prime ends from both sides of $K$ coincide (up to a sign change) with $\alpha$. 
\end{corollary}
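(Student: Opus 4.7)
The plan is to first apply Theorem \ref{th:continuo} to conclude that $K$ is annular, and then to transport the rotation number argument of \cite[Proposition 5.2]{franks-lecalvez} from the sphere setting to an annular neighborhood of $K$. Since $f$ is area preserving we have $\Omega(f)=S$, so Theorem \ref{th:continuo} applies to the $f$-invariant aperiodic continuum $K$; because $K\neq S$ by hypothesis, the only remaining alternative is that $K$ is annular. Fix an annular neighborhood $A$ of $K$, a lift $\tilde f\colon \tilde A\to \tilde A$ to the universal cover $\tilde A\simeq \R\times(0,1)$, and let $\pi_1$ denote projection onto the first coordinate. The rotation set $\rho_f(K)\subset \R/\Z$ is then defined as the collection of accumulation points of sequences $(\pi_1(\tilde f^{n}(\tilde x))-\pi_1(\tilde x))/n$ as $n\to\infty$ and as $\tilde x$ ranges over lifts of points in $K$, exactly as in \cite[\S3]{franks-lecalvez}.

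The core of the corollary is the claim that $\rho_f(K)$ reduces to a single irrational number $\alpha$. Here I would follow Franks--Le Calvez almost verbatim, after replacing their sphere by the two-sphere obtained from $A$ by collapsing each of its two boundary circles to a point. If $\rho_f(K)$ contained a rational number $p/q$, then passing to the iterate $f^q$ (which remains area preserving, and, by the aperiodicity of $K$, has no fixed points in $K$) the index and Lefschetz tools of \S4, applied to this sphere as in the proof of Theorem \ref{th:continuo}, would force a fixed point of $f^q$ in $K$, yielding a contradiction. If $\rho_f(K)$ contained two distinct irrationals, then a Birkhoff-type argument using area preservation produces orbits of $K$ whose lifts drift at different speeds; an intersection argument then yields an orbit with rational rotation number, reducing to the first case. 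Each of these steps is local to a neighborhood of $K$ and does not use any global topological feature of the ambient surface beyond what Theorem \ref{th:continuo} already provides.

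Finally, each of the two connected components $U^\pm$ of $A\sm K$ is a topological annulus with one end accumulating on $K$. Carath\'eodory's prime end theorem compactifies $U^\pm$ on that side by a circle of prime ends, on which $f$ extends as an orientation-preserving homeomorphism with a well-defined rotation number. Choosing lifts consistently with $\tilde f$ one verifies that these two prime end rotation numbers both equal $\alpha\in\R/\Z$, but with opposite signs, because the two prime end circles inherit opposite orientations from the oriented annulus $A$. The principal obstacle in this program is the collapse of $\rho_f(K)$ to a single point: this is the step in which the area-preserving hypothesis is essential and in which the transition from the sphere setup of \cite{franks-lecalvez} to the annular setup demands the most care, though thanks to the reduction provided by Theorem \ref{th:continuo} it should be largely a bookkeeping matter.
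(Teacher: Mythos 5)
Your overall route matches the paper's: apply Theorem~\ref{th:continuo} (since area preservation gives $\Omega(f)=S$) to conclude $K$ is annular, fix an annular neighborhood $A$ and lift to its universal cover to define $\rho_f(K)$, and then import the argument of \cite[Proposition 5.2]{franks-lecalvez}. The paper's ``proof'' is exactly this reduction followed by the observation that the Franks--Le Calvez argument carries over with almost no modification, so in strategy you have reproduced it.

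One caution about the middle paragraph, where you fill in a sketch of the Franks--Le Calvez mechanism: the claim that ``the index and Lefschetz tools of \S4, applied to this sphere as in the proof of Theorem~\ref{th:continuo}, would force a fixed point of $f^q$ in $K$'' is not how that argument runs. The Lefschetz/index computation in the proof of Theorem~\ref{th:continuo} constrains the topology of the collapsed surface and locates fixed points \emph{inside the complementary disks} of $K$, never in $K$ itself; it cannot manufacture a fixed point in $K$ from a rational value in $\rho_f(K)$. The actual route in \cite{franks-lecalvez} is via prime ends: if the prime end rotation number of a complementary disk were rational $p/q$, then $f^q$ would have a fixed prime end, and a Cartwright--Littlewood-type theorem (together with the non-wandering hypothesis, in the spirit of Lemma~\ref{lem:index-bd}) then yields a fixed point of $f^q$ in $\partial D\subset K$, contradicting aperiodicity; the collapse of $\rho_f(K)$ to the common prime end rotation number then uses the area-preserving hypothesis separately. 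Since the paper delegates all of this to the citation, your proposal is not ``wrong'' as a proof, but the sketch you offered of the cited argument would not survive if executed literally.
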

\end{remark}

\subsubsection{Proof of Theorem \ref{th:continuo}}
We may assume that $f$ is orientation-preserving (otherwise consider $f^2$ instead of $f$).
If $K=S$ and $f$ has no periodic points, then $S=\T^2$ by the Lefschetz theorem, and we are done. Now suppose that $f$ has no periodic points in $K$ and $K\neq S$. We need to show that $K$ is annular.

Consider the family $\mc{V}$ of connected components of $S\sm K$ which are not topological disks, which is finite by Proposition \ref{lem:inf-cc-disk}. Since open sets are nonwandering, each element of $\mc{V}$ is periodic by $f$. Choosing a power of $f$ instead of $f$ we may (and we do from now on) assume that each element of $\mc{V}$ is fixed by $f$. 

Since each  $V\in \mc{V}$ is a complementary domain, by Proposition \ref{pro:regular-surface} we can choose a compact surface with boundary $S_V\subset V$ such that $V\sm S_V$ has finitely many components, all of which are annuli.  

Given $V\in \mc{V}$, the ideal boundary points of $V$ are periodic by $(f|_V)^*$, so by taking a power of $f$ instead of $f$ we may assume that they are in fact fixed. This implies that if $\gamma$ is a sufficiently small closed loop in $V$ which bounds a disk containing $p^*$ in $V^*$, then $f(\gamma)$ is homotopic to $\gamma$ in $V$ (and thus in S). Moreover, $f(Z(p^*)) = Z(p^*)$ for any $p^*$ in $\ib V$. Note also that $Z(p^*)\subset K$ for all $p^*\in \ib V$.

Let $A_1,\dots, A_n$ be the connected components of $V\sm S_V$. Each $A_i$ is a topological annulus, whose boundary in $S$ is given by a loop $\gamma_i$ and the continuum $Z_i = \ol{A}_i\cap K$ (which is $Z(p^*)$ for some $p^*\in V^*$). Since $f(Z_i)=Z_i$, if $\sigma_i \subset A_i$ is an essential simple closed curve close enough to $Z_i$, we have that $f(\sigma_i)\subset A_i$. Since $f(\sigma_i)$ is homotopic to $\sigma_i$ in $A_i$, there exists a homeomorphism  $h_i\colon A_i\to A_i$ which maps $f(\sigma_i)$ to $\sigma_i$ and which is the identity in a neighborhood of the boundary of $A_i$; furthermore, we may assume that $h_i(x)=f^{-1}(x)$ for $x\in f(\sigma_i)$ (see \cite{epstein}). Extending $h_i$ to the identity outside $A_i$, and letting $\til{f}=h_1\dots h_nf$, we get an orientation preserving homeomorphism such that $\til{f}(x)=f(x)$ for $x\in S\sm \cup_i A_i$ and $\til{f}(\sigma_i)=\sigma_i$. If $\til{S}_V$ is the surface bounded by $\sigma_1,\dots, \sigma_n$ which intersects $S_V$, we have that $\til{f}(\til{S}_V)=\til{S}_V$ and $\til{f}$ is the identity on the boundary of $\til{S}_V$. We do this for each $V\in \mc{V}$, and finally we consider the boundaryless compact surface $\til{S}$ obtained by collapsing each boundary circle of $S\sm \inter\cup_{V\in \mc{V}}{\til{S}_V}$ to a point, and the induced homeomorphism which we still call $\til{f}$, for which these points are fixed (see figure \ref{fig1}). This new surface contains $S\sm \cup\mc{V}$, and $\til{f}$ coincides with $f$ on that set. Each $V\in \mc{V}$ was replaced by a (finite) union of one or more invariant topological disks, and the boundary of each of these disks is contained in $K$ (and hence, it contains no periodic points). 
\begin{figure}[ht!]
\centering{\resizebox{0.6\textwidth}{!}{\includegraphics{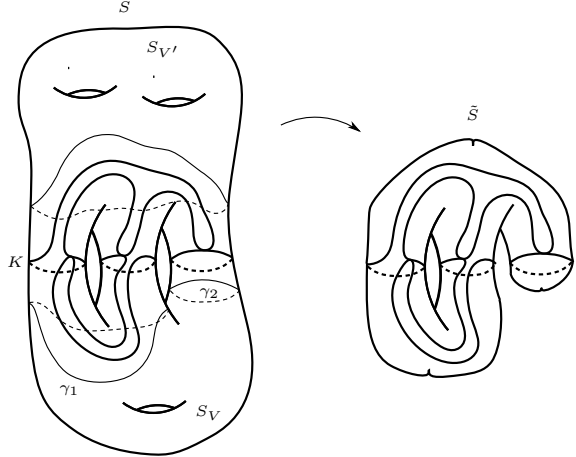}}}
  \caption{The complement of $K$ in $\til{S}$ consists of topological disks} \label{fig1}
\end{figure}

Since $\mc{V}$ consists of all components of $S\sm K$ which are not disks, from our construction we see that all components of $\til{S}\sm K$ are topological disks. 

Suppose that $\chi(\til{S})\leq 0$. Then by Lemma \ref{lem:lefschetz} there is $n$ such that $L(\til{f}^n)\leq 0$. 
Let $D$ be a connected component of $\til{S}\sm K$ such that $f^n(D)=D$. We know that $\til{f}^n$ coincides with $f^n$ in a neighborhood of $\bd D\subset K$, so the fact that $f^n$ has no wandering points (and no fixed points in $K$) implies that the hypotheses of Lemma \ref{lem:index-bd} hold. Hence, the fixed point index of $\til{f}^n$ in $D$ must be $1$ (in particular, $D$ contains a fixed point).
From this, it follows that there are finitely many $\til{f}^n$-invariant components in $\til{S}\sm K$. In fact, if there were infinitely many, then one could find a sequence of fixed points accumulating in $K$, which contradicts the aperiodicity of $K$. Moreover, we may assume that there is at least one such component (by starting with an appropriate power of $f$ instead of $f$).

Since $\til{f}^n$ has no fixed points in $K$, denoting the components of $\til{S}\sm K$ which are $\til{f}^n$-invariant by $D_1,\,\dots,\, D_k$, we have from the Lefschetz formula $$L(\til{f}^n) = \sum_{i=1}^k \mathrm{Ind}_{\til{f}^n}(D_i) = k \geq 1,$$ which contradicts our choice of $n$. From this we conclude that $\chi(\til{S})>0$, hence $\til{S}$ is a sphere. 

But then, since $\til{f}$ preserves orientation, $L(\til{f}^m)=\chi(\til{S})=2$ for all $m$. This implies that $\til{S}\sm K$ consists of exactly two components  $D_1$ and $D_2$. In fact, if there were more than two such components, it would be possible to choose $m$ such that $\li{f}^m$ leaves three or more of those components fixed, so that, repeating our previous argument, $L(\til{f}^m)\geq 3$, contradicting our previous claim.

Since $D_1$ and $D_2$ are topological disks, each of them is the union of an increasing sequence of closed topological disks, so that $K$ is the intersection of a decreasing sequence of annuli $\{A_n\}$. These annuli are eventually contained in any neighborhood of $K$, which means that, for some $n_0$,  $\{A_n\}_{n\geq n_0}$ is a decreasing sequence of annuli in the original surface $S$, and $\cap_{n\geq n_0} A_n=K$. Thus $K$ is annular in $S$. This completes the proof. \qed

\subsection{Non-orientable case of Theorem \ref{th:continuo}}

\begin{corollary} \label{coro:non-orientable} Let $f\colon S\to S$ be a homeomorphism of the closed non-orientable surface $S$, such that $\Omega(f)=S$. If $K$ is an $f$-invariant continuum, then one of the following holds:
\begin{enumerate}
\item $f$ has a periodic point in $K$;
\item $K$ is annular;
\item $K$ is the intersection of a nested sequence of M\"obius strips;
\item $K=S=$ Klein bottle.
\end{enumerate}
\end{corollary}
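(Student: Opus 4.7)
The plan is to mimic the proof of Theorem \ref{th:continuo} essentially verbatim, observing that the preliminary results (Propositions \ref{lem:inf-cc-disk} and \ref{pro:regular-surface}, and the index Lemma \ref{lem:index-bd}) do not require orientability of the ambient surface. After passing to a suitable power of $f$, one performs the same surgery, replacing each non-disk component of $S\sm K$ by a finite union of invariant disks, to obtain a closed (possibly non-orientable) surface $\til{S}$ containing $K$, together with a homeomorphism $\til{f}$ which coincides with $f$ in a neighborhood of $K$ and such that every component of $\til{S}\sm K$ is a topological disk.

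The next step is to extend Lemma \ref{lem:lefschetz} to non-orientable closed surfaces with $\chi \leq 0$. For the surfaces $N_k$ with $k\geq 3$ this is Xia's result cited in the original Lemma \ref{lem:lefschetz}; for the Klein bottle ($\chi=0$), one computes directly using $H_1(\cdot;\Q)=\Q$: the induced action of $\til f$ is multiplication by $\pm 1$, so $L(\til{f}^n)\in\{0,2\}$ and some power has $L\leq 0$. With this in place, the Lefschetz/index argument of the main proof goes through verbatim, yielding $\chi(\til S)>0$, so $\til S$ is either $S^2$ or $\R P^2$.

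If $\til S = S^2$, the main proof applies without change, showing that $\til S\sm K$ consists of two disks, $K$ is annular in $\til S$ and hence in $S$, giving case (2). If $\til S = \R P^2$, then $L(\til{f}^n)=1$ for every $n$, and the same index argument shows there is exactly one $\til{f}^n$-invariant complementary disk for every $n$. Combined with Remark \ref{rem:wandering}(2), which forces every complementary component to be periodic, choosing $n$ divisible by all the periods shows that $\til S\sm K$ is in fact a single disk $D$. Writing $D$ as an increasing union of closed disks and taking complements in $\R P^2$ produces a decreasing sequence of closed M\"obius strips with intersection $K$; these are eventually contained in any neighborhood of $K$ in $S$, so $K$ is a nested intersection of M\"obius strips in $S$, giving case (3). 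Finally, if $K = S$, Lefschetz forces $L(f^n)=0$ for every $n$; the case $S=\R P^2$ is ruled out because $L(f)=1$, and $N_k$ for $k\geq 3$ is ruled out because $\tr(f_*^n)=1$ for all $n$ would force $\det(I-tf_*)=1-t$, hence $\det f_*=0$, contradicting invertibility of $f_*$ on $H_1(\cdot;\Q)$. Only $S=$ Klein bottle remains, giving case (4).

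The main difficulty I expect is the $\til S = \R P^2$ step: one must combine the Lefschetz count $k=1$ (valid for every $n$) with the permutation structure of the complementary components to conclude that there is exactly one such component overall, and then verify that the nested M\"obius-strip structure in $\til S$ descends to one in $S$ through the surgery. The extension of Lemma \ref{lem:lefschetz} to the Klein bottle and the algebraic elimination of $N_k$ in the $K=S$ case are both straightforward computations.
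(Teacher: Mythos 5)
Your proposal takes a genuinely different route from the paper. The paper does \emph{not} redo the surgery argument on the non-orientable surface; instead it passes to the orientable double cover $\pi\colon \hat S\to \hat S$, lifts $f$ to $\hat f$, checks that $\hat f$ is still non-wandering, and applies Theorem \ref{th:continuo} (already proved) to an invariant component $\hat K$ of $\pi^{-1}(K)$. If $\hat S=\T^2$ then $S$ is the Klein bottle, giving case (4); if $\hat K$ is annular, the nested annuli project to nested annuli or nested M\"obius strips around $K$, giving (2) or (3). This is a short reduction with essentially no new verification needed.

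Your direct approach is plausible but forces you to re-examine every ingredient of the main proof for orientability, and you gloss over the one spot where that is not automatic: the extension of Lemma \ref{lem:lefschetz} to closed non-orientable surfaces with $\chi<0$. The paper only cites Xia for the \emph{orientable} case with $\chi<0$; you assert that "this is Xia's result" for $N_k$, $k\geq 3$, but that is not what the cited lemma says, and you give no argument. (The statement is in fact true, e.g.\ via Nielsen--Thurston theory, or indeed by passing to the double cover --- but at that point you may as well use the double cover from the start, which is exactly what the paper does.) A second, smaller gap: in the $\til S=S^2$ case you say "the main proof applies without change," but the main proof opened by replacing $f$ with $f^2$ to make it orientation-preserving, which has no meaning on the non-orientable $S$ you started with; you need to note explicitly that once $\til S=S^2$ is orientable you can pass to $\til f^2$ to get $L(\til f^{2m})=2$. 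Your treatment of the $\R P^2$ case ($L\equiv 1$, hence a unique invariant complementary disk, hence a nested M\"obius-strip structure) and your algebraic elimination of $\R P^2$ and $N_k$, $k\geq 3$, when $K=S$ are both correct. On balance: your approach would work once the non-orientable Lefschetz lemma is actually established, but the double-cover reduction the paper uses sidesteps all of these issues and is considerably cleaner.
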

\begin{proof}
We consider the oriented double covering $\pi\colon \hat{S}\to S$, and a lift $\hat{f}\colon \hat{S}\to \hat{S}$ of $f$. Since $f$ has no wandering points, that must be true of $\hat{f}$ as well. In fact, if $\hat{U}\subset \hat{S}$ is a sufficiently small open set, then $\pi^{-1}(\pi(\hat{U})) = \hat{U}\cup \hat{U}'$ where the union is disjoint and $\hat{U}'$ is homeomorphic to $\hat{U}$. If $n> 0$ is such that $f^n(\pi(U))\cap \pi(U)\neq \emptyset$ then either $\hat{f}^n(\hat{U})\cap \hat{U}\neq \emptyset$ or $\hat{V}'=\hat{f}^n(\hat{U})\cap \hat{U}'\neq \emptyset$. If the latter case holds, then again $\pi^{-1}(\pi(\hat{V}'))$ is the disjoint union of $\hat{V}'$ and $\hat{V}$, where $\hat{V}\subset \hat{U}$, and there is $m>0$ such that $\hat{f}^m(\hat{V})\cap \hat{V}' \neq \emptyset$ (which implies that $\hat{f}^{m+n}(\hat{U})\cap \hat{U})\neq \emptyset$) or $\hat{f}^m(\hat{V})\cap \hat{V} \neq \emptyset$ (which implies $\hat{f}^m(\hat{U})\cap \hat{U}\neq \emptyset$), so $\hat{U}$ is nonwandering.

Now $\pi^{-1}(K)$ consists of either a unique connected $\hat{f}$-invariant set or two copies of $K$ which are invariant by $\hat{f}$ if the lift is chosen appropriately. Let $\hat{K}$ be one of those components (or the unique component if there is only one). If $K$ has no periodic points of $f$, then $\hat{f}$ cannot have a periodic point in $\hat{K}$, because periodic points of $\hat{f}$ project to periodic points of $f$. Thus we are in the setting of Theorem \ref{th:continuo}, and we conclude that either $\hat{K}$ is annular or $\hat{S}=\T^2$. In the latter case, it follows that $S$ is a Klein bottle. In the former case, we have a decreasing sequence of topological annuli $\{\hat{A}_i\}_{i\in \N}$ such that $\hat{K} = \bigcap_i \hat{A}_i$. The sets $\hat{A}_i$ project to a decreasing sequence of neighborhoods $\{A_i\}_{i\in \N}$ of $K$, each of which is either homeomorphic to an annulus (in which case it projects injectively) or to a M\"obius strip, and it is easy to see that $K = \bigcap_i A_i$. By taking a subsequence of $\{A_i\}_{i\in \N}$ if necessary, we see that either $(2)$ or $(3)$ must hold. 
\end{proof}

\bibliographystyle{amsalpha} 
\bibliography{tesis}

\providecommand{\bysame}{\leavevmode\hbox to3em{\hrulefill}\thinspace}
\providecommand{\MR}{\relax\ifhmode\unskip\space\fi MR }
\providecommand{\MRhref}[2]{%
  \href{http://www.ams.org/mathscinet-getitem?mr=#1}{#2}
}
\providecommand{\href}[2]{#2}
\begin{thebibliography}{FLC03}

\bibitem[AS60]{ahlfors-sario}
L.~V. Ahlfors and L.~Sario, \emph{Riemann surfaces}, Princeton Univ. Press,
  1960.

\bibitem[BC04]{bonatti-crovisier}
C.~Bonatti and S.~Crovisier, \emph{R\'ecurrence et g\'en\'ericit\'e}, Invent.
  Math. \textbf{158} (2004), no.~1, 33--104.

\bibitem[CL51]{cartwright-littlewood-2}
M.~Cartwright and J.~E. Littlewood, \emph{Some fixed point theorems}, Ann. of
  Math. \textbf{54} (1951), 1--17.

\bibitem[Dol80]{dold}
A.~Dold, \emph{Lectures on algebraic topology}, Springer, Berlin, 1980.

\bibitem[Dou92]{douady}
R.~Douady, \emph{Applications du th\'eor\`eme des tores invariants}, Ph.D.
  thesis, Universit\'e de Paris 7, 1992, Th\`ese de troisi\'eme cycle.

\bibitem[Eps66]{epstein}
D.~B.~A. Epstein, \emph{Curves on $2$-manifolds and isotopies}, Acta Math.
  \textbf{115} (1966), 83--107.

\bibitem[FL03]{franks-lecalvez}
J.~Franks and P.~Le~Calvez, \emph{Regions of instability for non-twist maps},
  Ergodic Theory Dynam. Systems (2003), no.~23, 111--141.

\bibitem[J09]{jaeger}
T.~J\"ager, \emph{Linearisation of toral homeomorphisms}, Invent. Math.
  \textbf{176} (2009), no.~3, 601--616.

\bibitem[KN]{k-m}
A.~Koropecki and M.~Nassiri, \emph{Transitivity of generic semigroups of
  area-preserving surface diffeomorphisms}, preprint, arXiv:0905.0305v1.

\bibitem[Mat81]{mather-area}
J.~Mather, \emph{Invariant subsets of area-preserving homeomorphisms of
  surfaces}, Advances in Math. Suppl. Studies \textbf{7B} (1981), 531--561.

\bibitem[Mat82]{mather-cara}
\bysame, \emph{{Topological proofs of some purely topological consequences of
  Caratheodory's theory of prime ends}}, {Selected Studies} (Th.~M. Rassias and
  G.~M. Rassias, eds.), {North-Holland}, 1982, pp.~225--255.

\bibitem[Ric63]{richards}
I.~Richards, \emph{On the classification of noncompact surfaces}, Trans. Amer.
  Math. Soc. \textbf{106} (1963), no.~2, 259--269.

\bibitem[Xia06]{xia-area}
Z.~Xia, \emph{Area-preserving surface diffeomorphisms}, Comm. Math. Phys.
  \textbf{263} (2006), 723--735.

\end{thebibliography}

\end{document}